\newtheorem{thm}{Theorem}[section]
\newtheorem{lem}[thm]{Lemma}
\newtheorem{cor}[thm]{Corollary}
\theoremstyle{definition}
\newtheorem{example}[thm]{\bs\;Example}
\numberwithin{equation}{section}
\newcommand{\beq}{\begin{equation*}}
\newcommand{\eeq}{\end{equation*}}
\newcommand{\beqn}{\begin{equation}}
\newcommand{\eeqn}{\end{equation}}
\newcommand{\dd}{\mathrm{d}}
\newcommand{\tn}{\textnormal}
\newcommand{\ds}{\displaystyle}
\newcommand{\ph}{\phi}
\newcommand{\eps}{\varepsilon}
\newcommand{\mI}{m_\tn{i}}
\newcommand{\moi}{m_\tn{i}^{01}}
\newcommand{\mio}{m_\tn{i}^{10}}
\newcommand{\AI}{A_\tn{i}}
\newcommand{\Aoi}{A_\tn{i}^{01}}
\newcommand{\Aio}{A_\tn{i}^{10}}
\newcommand{\Ft}{\widetilde{F}}
\newcommand{\pI}{p_\tn{i}}
\newcommand{\pe}{p_\tn{e}}
\newcommand{\Cp}{C_1^+}
\newcommand{\Cm}{C_1^-}
\newcommand{\bs}{$\bigstar$}
\begin{document}

\title{Measures and geometric probabilities for ellipses\\ intersecting circles}
\author{Uwe B\"asel}
\date{} 
\maketitle
\begin{abstract}
\noindent Santal\'o calculated the measures for all positions of a moving line segment in which it lies inside a fixed circle and intersects this circle in one or two points. From these measures he concluded hitting probabilities for a line segment thrown randomly onto an unbounded lattice of circles. In the present paper these results are generalized to ellipses  instead of line segments. The respective measures for all positions of a moving ellipse in which it lies completely inside a fixed circle, encloses it, and intersects it in two or four points are derived. Then the hitting probabilities for lattices of circles are deduced. It is shown that the results for a line segment follow as special cases from those of the ellipse.\\[0.2cm]
\textbf{2010 Mathematics Subject Classification:} 52A22, 53C65, 60D05, 53A04, 53A17, 51N20\\[0.2cm]
\textbf{Keywords:} intersections of ellipse and circle, measures for ellipses, parallel curves of ellipses, hitting probabilities, Minkowski sum 
\end{abstract}

\section{Introduction}

Santal\'o \cite{Santalo40} calculated the measure $\mathfrak{M}_\tn{i}$ of all oriented line segments of length $\ell$ that are completely contained in a disk of radius $r$. If $\ell\ge 2r$, $\mathfrak{M}_\tn{i}$ is obviously equal to zero; if $\ell\le 2r$, then
\begin{align} \label{Eq:Mi}
  \mathfrak{M}_\tn{i}
= {} & 2\pi\left(\pi r^2 - 2r^2\arcsin\frac{\ell}{2r}
- \ell\,\sqrt{r^2-\frac{\ell^2}{4}}\,\right).
\end{align}
From this he concluded the measures
\beqn \label{Eq:M1_M2} 
  \mathfrak{M}_1 = 4\pi^2r^2 - 2\hspace{1pt}\mathfrak{M}_\tn{i}\,,\quad
  \mathfrak{M}_2 = 4\pi r\ell - 2\pi^2r^2 + \mathfrak{M}_\tn{i}
\eeqn
for all line segments intersecting the circle in one and two points, respectively. Then he considered the random throw of a line segment onto an unbounded lattice of circles, as shown as an example in Fig.\ \ref{Fig:Reseau}. Such a lattice consists of circles of radius $r$ whose center points are placed at the vertices of parallelograms with sides of length $s$ and $t$, and angle $\sigma$. Under the assumption that the line segment can hit only one circle of the lattice at the same time, he derived the geometric probabilities that this line segment, placed randomly onto the lattice, hits a circle in one point, in two points, lies completely inside a circle, and outside all circles (see Fig.\ \ref{Fig:Reseau}).

Duma and Stoka calculated the probability that an ellipse hits a lattice of parallelograms \cite{Duma_Stoka}.

Most recently, B\"ottcher calculated the measures and geometric probabilities that an arbitrarily long line segment hits exactly one or two sides of a triangle \cite{Boettcher01}, and that this segment hits two non-overlapping circles \cite{Boettcher02}.

In this paper we generalize Santal\'o's above mentioned results for ellipses (see Fig.\ \ref{Fig:Reseau}) instead of line segments. 

\begin{figure}[h]
  \begin{center}
	\includegraphics[scale=0.7]{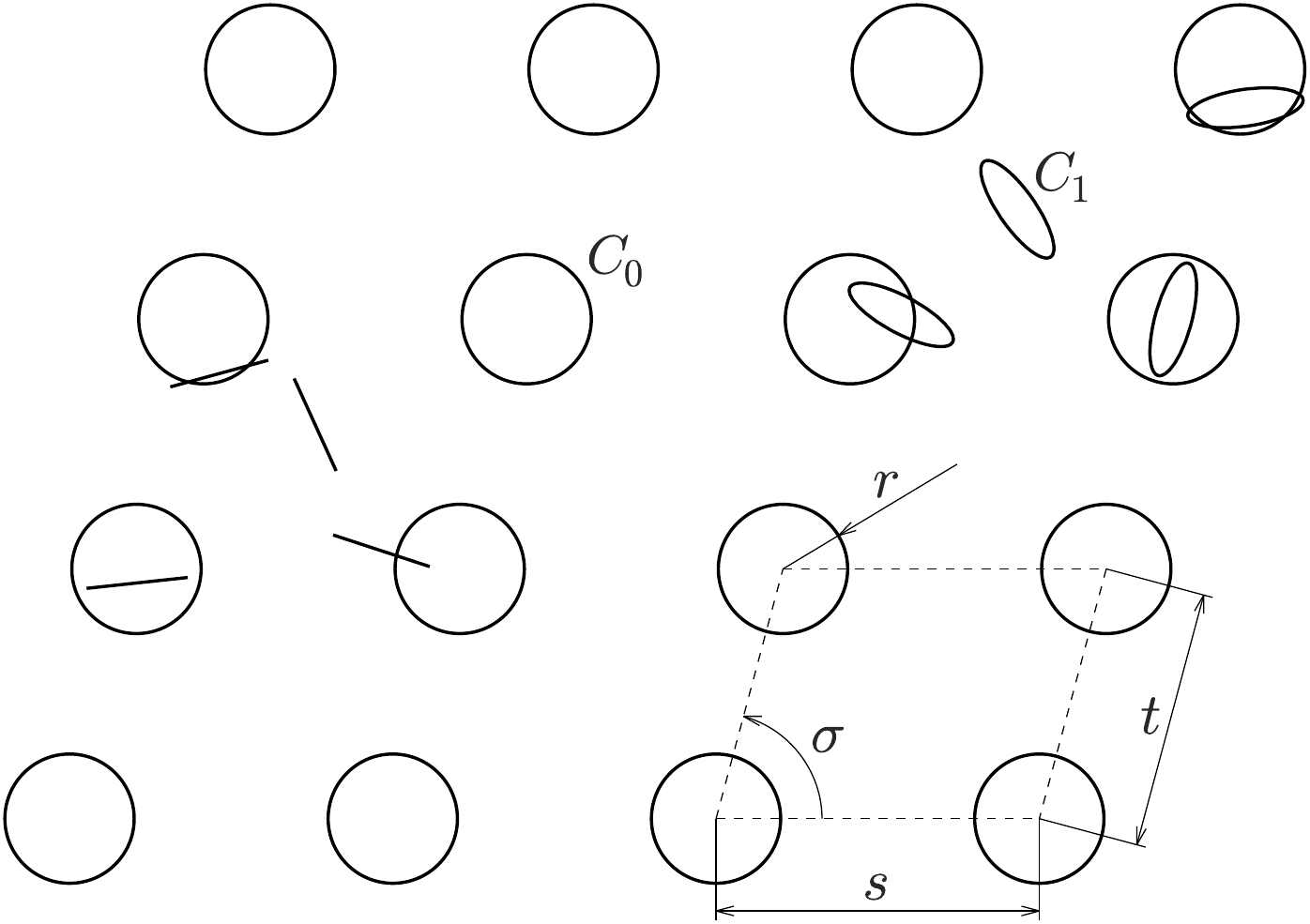}
  \end{center}
  \vspace{-0.4cm}
  \caption{\label{Fig:Reseau} Random throws of an ellipse $C_1$ and a line segment onto a lattice of circles $C_0$}
\end{figure}

We denote by $K_0$ the closed disk bounded by the circle $C_0:=\partial K_0$ of radius $r$, and by $K_1$ the closed set of points bounded by the ellipse $C_1:=\partial K_1$ with semi-major axis of length $a$ and semi-minor axis of length $b$. 

\newpage\noindent
The present paper is organized as follows:
\begin{itemize}[leftmargin=0.6cm] \setlength{\itemsep}{-1pt}
\item In Section \ref{Sec:Intersections} we first consider a moving ellipse $C_1^*$ with fixed direction (indicated by the asterisk) and a fixed circle $C_0$, and discuss the possible intersection cases depending on the position of the center point $M_1$ of $C_1^*$. We show that it is possible to consider the inverse translation with $C_0$ moving and $C_1^*$ fixed instead of the original motion. Then, the outer and the inner parallel curve of $C_1^*$ in the distance $r$ ($=$ radius of $C_0$) bound sets which are essential for the following investigations.       
\item In Section \ref{Sec:Measures1} we calculate areas which are the measures for all positions of $C_1^*$ with $C_0\subset K_1^*$ or $C_1^*\subset K_0$, and all positions of $C_1^*$ in which it intersects $C_0$ in two or four points.
\item In Section \ref{Sec:Measures2} we consider the motion of $C_1$ without the restriction to a fixed direction, and derive the respective measures for all positions of $C_1$ ($=$ all congruent copies of $C_1$) intersecting the fixed circle $C_0$.  
\item In Section \ref{Sec:Probabilities} the measures from Section \ref{Sec:Measures2} are used to find the hitting probabilities for an ellipse $C_1$ which is thrown randomly onto a lattice of circles $C_0$ as it is shown in Fig.\ \ref{Fig:Reseau}.
\item In Section \ref{Sec:Special_case} we show that Santal\'os measures \eqref{Eq:Mi}, \eqref{Eq:M1_M2} and probabilities for a line segment follow from the results in Sections \ref{Sec:Measures2} and \ref{Sec:Probabilities}, respectively.   
\end{itemize}  

\section{Intersections of a circle and an ellipse}
\label{Sec:Intersections}

From B\'ezout's theorem (see e.\ g.\ \cite[pp.\ 291-304]{Brieskorn_Knoerrer}, \cite[pp.\ 51-69]{Kirwan}) we know that an ellipse and a circle always have four intersection points if each point is counted with its intersection multiplicity. We have the following cases:

\begin{itemize}[leftmargin=0.6cm] \setlength{\itemsep}{-2pt}
\item[a)] All intersections points are real.
\item[b)] There are two real and two conjugate complex intersection points.
\item[c)] There are two pairs of conjugate complex intersection points.  
\end{itemize}
Even in the case that the ellipse is also a circle, B\'ezout's theorem holds true if homogeneous coordinates are used. Here two of the four intersection points are the so called {\em circular points at infinity}. All circles pass through this complex pair of points. (See e.\:g. \cite[p.\ 94]{Kowalewski} and \cite{Lamoen}.)

Here we are interested only in real intersection points.

\begin{example} \label{Exm:Example}
Let us consider the fixed circle $C_0$ and the moving ellipse $C_1$ with fixed direction in Fig.\ \ref{Fig:Intersections1}. We write $C_1^*$ in order to indicate that $C_1$ has fixed direction. $C_1^*$ intersects $C_0$ in two distinct points if the center point $M_1$ of $C_1^*$ lies in the open set bounded by the curves $C^*$ and $C^{**}$ (positions 1 and 2 of $C_1^*$). $C_1^*$ does not intersect $C_0$ if $M_1$ is outside $C^*$ (position 3) or inside the middle loop of $C^{**}$ (position 4). $C_1^*$ intersects $C_0$ in four distinct points if $M_1$ lies inside the upper or lower loop of $C^{**}$ (position 5). Clearly, the closed set bounded by $C^*$ is the Minkowski sum $K_0 + K_1^*$, where $K_1^*$ is the closed set bounded by $C_1^*$.

\begin{figure}[h]
  \begin{center}
	\includegraphics[scale=0.9]{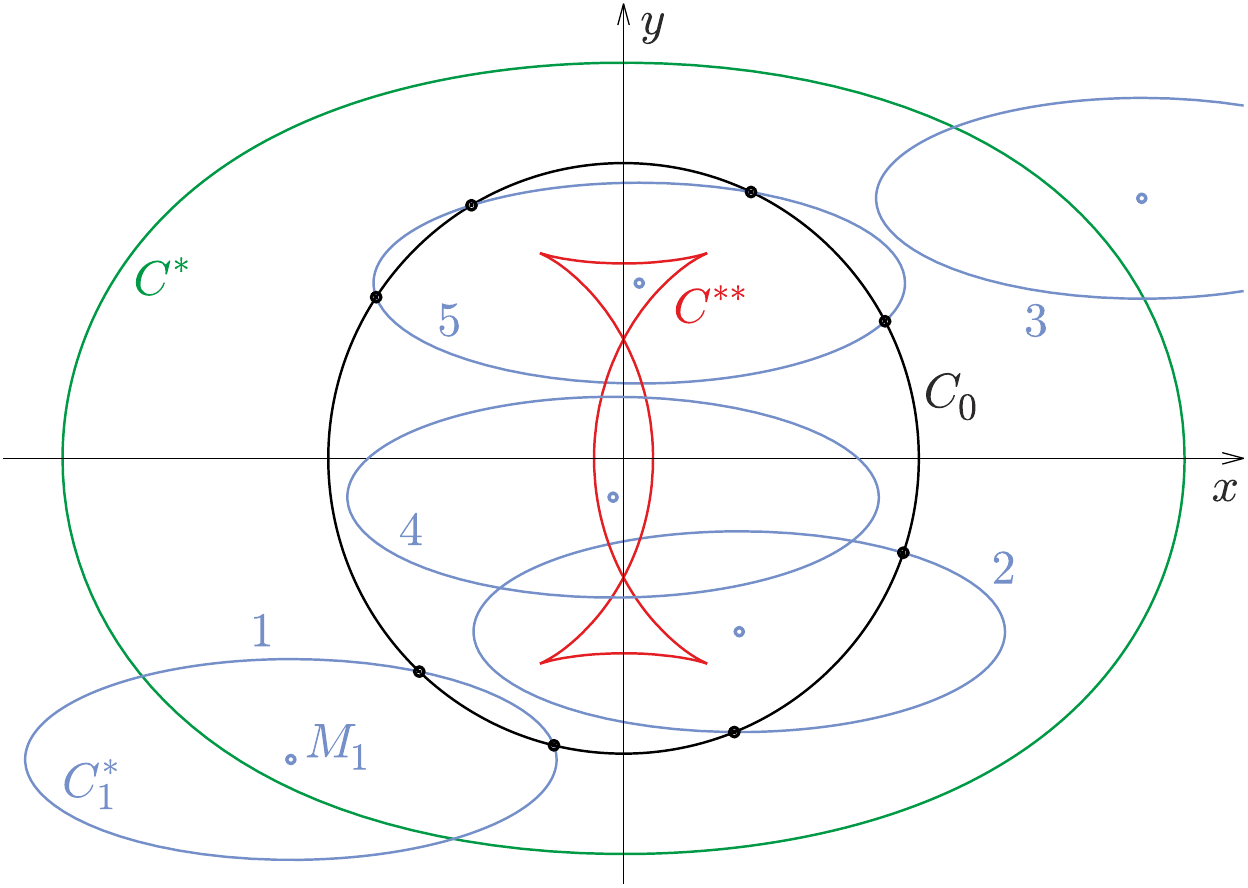}
  \end{center}
  \vspace{-0.5cm}
  \caption{\label{Fig:Intersections1} Fixed circle $C_0$, and moving ellipse $C_1^*$ in five positions $1,2,\ldots,5$}
\vspace*{0.5cm}
\end{figure}

\begin{figure}[h]
  \begin{center}
	\includegraphics[scale=0.9]{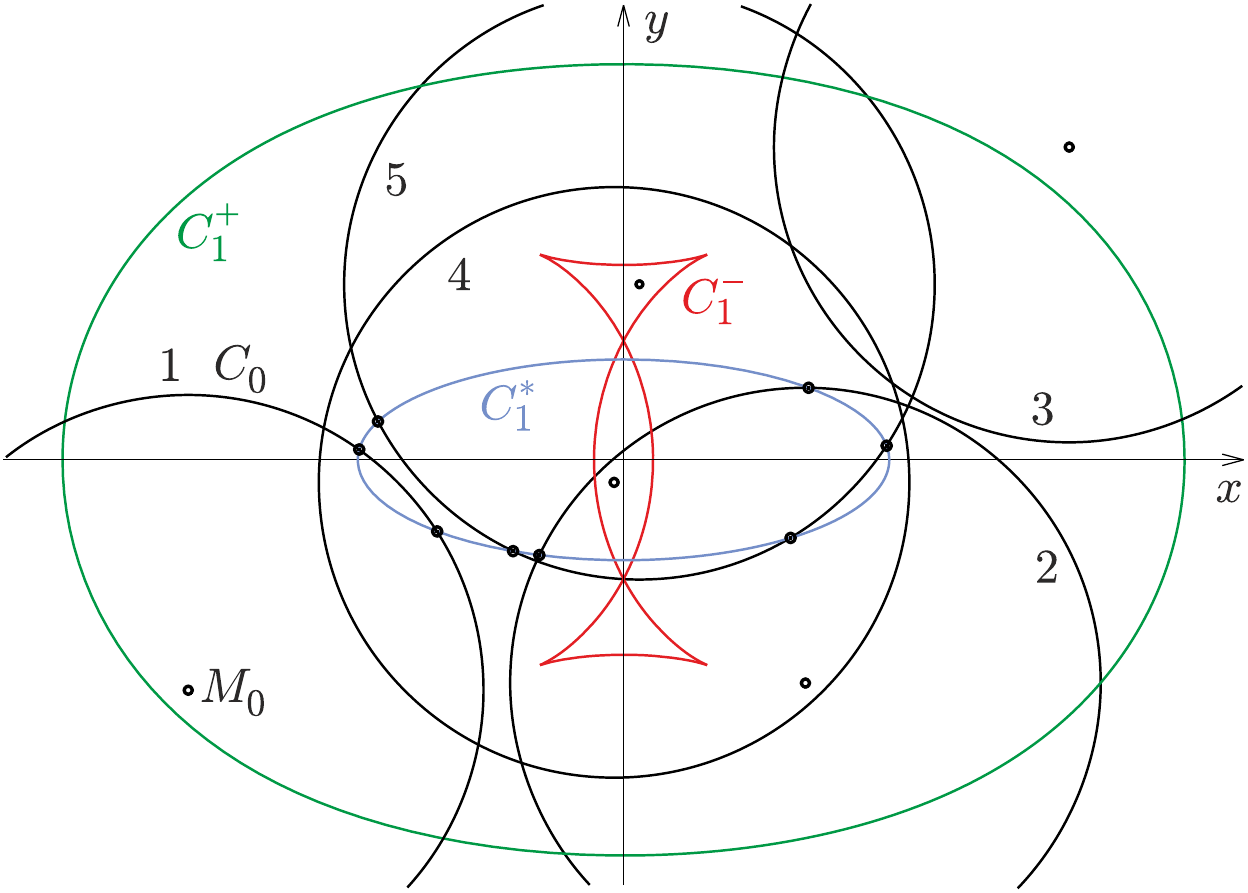}
  \end{center}
  \vspace{-0.5cm}
  \caption{\label{Fig:Intersections2} Fixed ellipse $C_1^*$, and moving circle $C_0$ in five positions $1,2,\ldots,5$}
\end{figure}

Now we consider the inverse translation which means that $C_1^*$ is fixed and $C_0$ is moving (see Fig.\ \ref{Fig:Intersections2}). Here we have the situation that $C_0$ intersects $C_1^*$ in two distinct points if the center point $M_0$ of $C_0$ lies in the open set bounded by the curves $\Cp$ and $\Cm$ (positions 1 and 2 of $C_0$). Due to the commutative property of the Minkowski addition, $K_0 + K_1^* = K_1^* + K_0 $, we immediately know that $\Cp \equiv C^*$. Since we consider only translations, we also have $\Cm \equiv C^{**}$. $\Cp$ and $\Cm$ are, respectively, the outer and inner parallel curve of $C_1^*$ in the distance $r$. Analogous to the original motion, one finds that $C_0$ does not intersect $C_1^*$ if $M_0$ is outside $\Cp$ (position 3) or inside the middle loop of $\Cm$ (position 4), and $C_0$ intersects $C_1^*$ in four distinct points if $M_0$ lies inside the upper or lower loop of $\Cm$ (position 5). \hfill\bs
\end{example}

\begin{figure}[h]
  \vspace{-0.3cm}
  \begin{center}
	\includegraphics[scale=0.87]{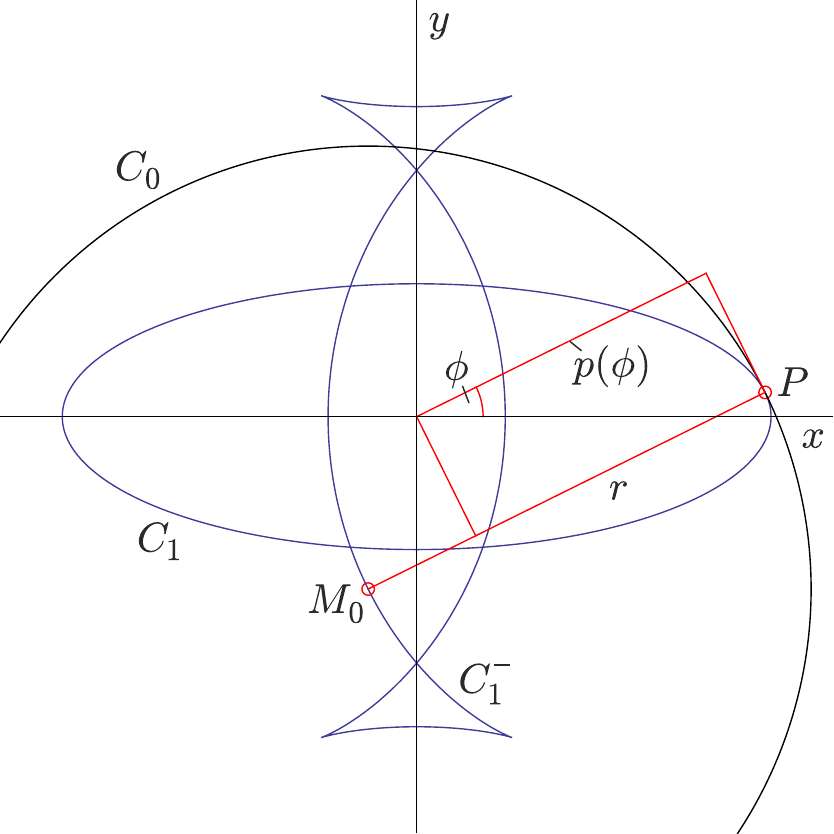}
  \end{center}
  \vspace{-0.6cm}
  \caption{\label{Fig:Support_function} Support functions $p(\phi)$ and $p(\ph)-r$ of $C_1$ and $\Cm$, respectively}
\end{figure}

Now, we will provide parametric representations of $C_1=C_1^*$, $\Cp$ and $\Cm$ with the support function $p(\ph)$ of $C_1$ (see Fig.\ \ref{Fig:Support_function}) that is given by
\beqn \label{Eq:support1}
  p(\ph)
= \sqrt{a^2\cos^2\ph + b^2\sin^2\ph}\,,\quad 0 \le \ph \le 2\pi\,,
\eeqn
with derivatives
\beq
  p'(\ph)
= - \frac{\left(a^2-b^2\right)\cos\ph\sin\ph}{\sqrt{a^2\cos^2\ph + b^2\sin^2\ph}}\,,\qquad
  p''(\ph)
= -\frac
	{\left(a^2-b^2\right)\left(a^2\cos^4\ph-b^2\sin^4\ph\right)}
	{\left(a^2\cos^2\ph + b^2\sin^2\ph\right)^{3/2}}\,.
\eeq  
Using the formulas
\beq
  x 
= p(\ph)\cos\ph - p'(\ph)\sin\ph\,,\qquad
  y 
= p(\ph)\sin\ph + p'(\ph)\cos\ph\,,
\eeq
(see e.\ g.\ \cite[p.\ 3]{Santalo}), we get 
\beq
  x
= \frac{a^2\cos\ph}{p(\ph)}\,,\quad
  y
= \frac{b^2\sin\ph}{p(\ph)}\,,\quad
  0 \le \ph \le 2\pi\,,
\eeq
as parametric parametric representation of $C_1$. $p(\ph)+r$ is the support function of $\Cp$, and $p(\ph)-r$ that of $\Cm$ (Fig.\ \ref{Fig:Support_function}.) Hence parametric representations of  $\Cp$ and $\Cm$ are given by
\beqn \label{Eq:Param_C1k}
\left.
\begin{aligned}
  x 
= {} & (p(\ph)+kr)\cos\ph - p'(\ph)\sin\ph 
= \left(\frac{a^2}{p(\ph)} + kr\right)\cos\ph\,,\\
  y 
= {} & (p(\ph)+kr)\sin\ph + p'(\ph)\cos\ph
= \left(\frac{b^2}{p(\ph)} + kr\right)\sin\ph\,,
\end{aligned}
\;\right\}\;
0 \le \ph \le 2\pi\,,
\eeqn
where $k = 1$ for $\Cp$, and $k = -1$ for $\Cm$.

Now we are looking for the singuarities (cusps) of $\Cm$ as may be seen in Figures \ref{Fig:Intersections1}, \ref{Fig:Intersections2} and \ref{Fig:Support_function}. The derivatives of the parametric representation of $\Cm$ are
\beq
\begin{aligned}
  x' 
= {} & p'\cos\ph - (p-r)\sin\ph - p''\sin\ph - p'\cos\ph
= -(p - r + p'')\sin\ph\,,\\
  y' 
= {} & p'\sin\ph + (p-r)\cos\ph + p''\cos\ph - p'\sin\ph
= (p - r + p'')\cos\ph\,. 
\end{aligned}
\eeq
So in order to find the singularities of $\Cm$ we have to solve the equation
\beq
  p(\ph) - r + p''(\ph) = 0
\eeq
for $\ph$. One finds the solutions
\beq
  \ph
= \pm\frac{1}{2}\arccos\frac{2(a^2b^2/r)^{2/3}-a^2-b^2}{a^2-b^2}\,.
\eeq
These solutions are real if
\beq
  -1 \le \frac{2(a^2b^2/r)^{2/3}-a^2-b^2}{a^2-b^2} \le 1\,.
\eeq
From the left and the right inequality it follows $r \le a^2/b$ and $r \ge b^2/a$, respectively.
This means that singularities of $\Cm$ occur only if $b^2/a \le r \le a^2/b$, where $b^2/a$ and $a^2/b$ are, respectively, the minimum and maximum radius of curvature of the ellipse $C_1$.
The solutions in the interval $0\le\ph\le 2\pi$ are given by
\beq
  \ph = \lambda\,,\quad 
  \ph = \pi-\lambda\,,\quad
  \ph = \pi+\lambda\,,\quad
  \ph = 2\pi-\lambda
\eeq
with
\beq
  \lambda
= \frac{1}{2}\,\arccos\frac{2(a^2b^2/r)^{2/3}-a^2-b^2}{a^2-b^2}
  \quad\mbox{if}\quad
  \frac{b^2}{a} \le r \le \frac{a^2}{b}\,.
\eeq

One gets the parametric presentation of the evolute $C_1^\tn{e}$ of $C_1$:
\beq
  x
= \frac{a^2\cos\ph}{p(\ph)}\left(1-\frac{b^2}{p^2(\ph)}\right),\qquad
  y
= \frac{b^2\sin\ph}{p(\ph)}\left(1-\frac{a^2}{p^2(\ph)}\right).
\eeq
Now we set the parameter functions of the evolute equal to that of $\Cm$. This yields
\beq
  \frac{a^2}{p} - r = \frac{a^2}{p} - \frac{a^2b^2}{p^3}
  \;\,\Longrightarrow\;\, \frac{a^2b^2}{p^3} = r\,,\qquad
  \frac{b^2}{p} - r = \frac{b^2}{p} - \frac{b^2a^2}{p^3}
  \;\,\Longrightarrow\;\, \frac{a^2b^2}{p^3} = r\,.
\eeq
We see that $\Cm$ and $C_1^\tn{e}$ have common points. Solving the equation
\beq
  a^2b^2 = rp^3(\ph)
\eeq
for $\ph$, one finds that the singularities of $\Cm$ are these common points.

\begin{figure}[h]
\begin{minipage}[c]{0.32\textwidth}
	\centering
	\includegraphics[scale=1.6]{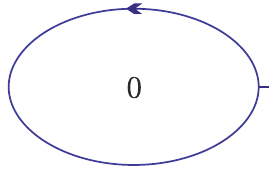}\\
	a) $0 < r < b^2/a$
\end{minipage}
\hfill
\begin{minipage}[c]{0.32\textwidth}
	\centering
	\vspace{0.1cm}
	\includegraphics[scale=1.6]{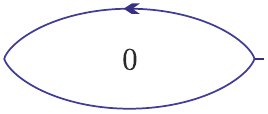}\\[0.1cm]
	b) $r = b^2/a$	
\end{minipage}
\hfill
\begin{minipage}[c]{0.32\textwidth}
	\centering
	\vspace{0.1cm}
	\includegraphics[scale=1.6]{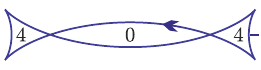}\\
	c) $b^2/a < r < b$
\end{minipage}\\[-0.8cm]
\makebox[1\textwidth][c]{
\begin{minipage}[c]{0.27\textwidth}
	\centering
	\vspace{0.6cm}
	\includegraphics[scale=1.6]{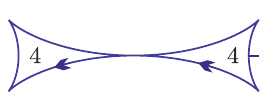}\\[-0.3cm]
	d) $r = b$
\end{minipage}
\hspace{1cm}
\begin{minipage}[c]{0.27\textwidth}
	\centering
	\vspace{0.5cm}
	\includegraphics[scale=1.6]{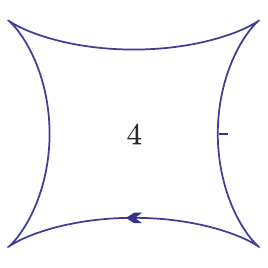}\\[-0.5cm]
	e) $b < r < a$	
\end{minipage}
}\\[0.4cm]
\begin{minipage}[c]{0.24\textwidth}
	\centering
	\includegraphics[scale=1.6]{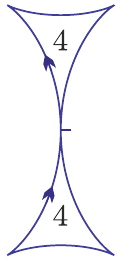}\\[0.1cm]
	f) $r = a$
\end{minipage}
\hfill
\begin{minipage}[c]{0.24\textwidth}
	\centering
	\includegraphics[scale=1.6]{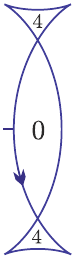}\\[0.1cm]
	g) $a < r < a^2/b$
\end{minipage}
\hfill
\begin{minipage}[c]{0.24\textwidth}
	\centering
	\includegraphics[scale=1.6]{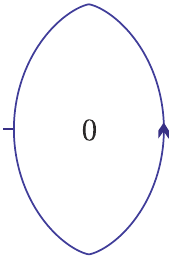}\\[0.1cm]
	h) $r = a^2/b$
\end{minipage}
\hfill
\begin{minipage}[c]{0.24\textwidth}
	\centering
	\includegraphics[scale=1.6]{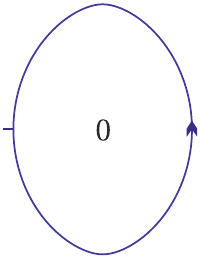}\\[0.1cm]
	i) $a^2/b < r < \infty$
\end{minipage}
\caption{Types of parallel curves $\Cm$ of one ellipse $C_1$ (in different scales)}
\label{Fig:Types}
\end{figure}

Fig.\ \ref{Fig:Types} shows all possible types of inner parallel curves $\Cm$ of $C_1=C_1^*$ for fixed values of $a$ and $b$. We denote by $\#(C_0\cap C_1)$ the number of intersection points of $C_0$ and $C_1$ counted with its multiplicities. If $M_0$ lies in the open set bounded by $\Cm$ and $\Cp$, then there are two distinct intersection points, hence $\#(C_0\cap C_1) = 2$. If $M_0\in\Cp$, then $C_1$ touches $C_0$ in one point with multiplicity $2$, hence also $\#(C_0\cap C_1) = 2$. We give some comments to the shown cases:
\begin{itemize}[leftmargin=0.6cm] \setlength{\itemsep}{-1pt}
\item[a)] Since the smallest radius of curvature of $C_1$ is equal to $b^2/a$, it follows that $C_0$ and $C_1$ cannot intersect in four points. If $M_0$ lies in the open set bounded by $\Cm$, then $\#(C_0\cap C_1) = 0$. If $M_0\in\Cm$, then $C_0$ touches $C_1$ in one point with multiplicity two, hence $\#(C_0\cap C_1) = 2$.
\item[b)] The situation is the same as in Case (a) if $M_0$ does not coincide with a cusp. If $M_0$ lies in one of the two cusps, then $C_0$ touches $C_1$ in one of the two points with smallest radius of curvature, and $C_0$ is the osculating circle in such a point (multiplicity four $\Rightarrow\#(C_0\cap C_1) = 4$). 
\item[c)] We have $\#(C_0\cap C_1) = 0$ if $M_0$ lies in the open set bounded by the inner loop of $\Cm$, and four distinct intersection points, hence $\#(C_0\cap C_1) = 4$, if $M_0$ lies in one of the open sets bounded by the outer loops of $\Cm$. If $M_0$ lies on the inner loop without the double points, then $C_0$ touches $C_1$ in one point with multiplicity two ($\#(C_0\cap C_1) = 2$). If $M_0$ coincides with one double point, then $C_0$ touches $C_1$ in two points, each with multiplicity two, hence $\#(C_0\cap C_1) = 4$. If $M_0$ lies on an outer loop without the cusps (and the double point), then $C_0$ touches $C_1$ in one point and there are two distinct intersection points, hence $\#(C_0\cap C_1) = 4$. If $M_0$ coincides with one of the cusps, then $C_0$ is the osculating circle in the touching point (multiplicity three) and there is a distinct intersection point (multiplicity one), hence $\#(C_0\cap C_1) = 4$.
\item[d)] There are four distinct intersection points if $M_0$ lies in the open set bounded by $\Cm$. If $M_0$ lies on $\Cm$ but not in the cusps and the self touching point, then $C_0$ touches $C_1$ in one point with multiplicity two and there are two distinct intersection points. If $M_0$ coincides with one cusp, then $C_0$ touches $C_1$ in one point where $C_0$ is the osculating circle, and there is one distinct intersection point. If $M_0$ coincides with the self touching point, then $C_0$ touches $C_1$ in two points, each with multiplicity two. In all of these subcases we have $\#(C_0\cap C_1) = 4$.
\item[e)] The situation is the same as in Case (d) with the exception that there is no self touching point. 
\item[f)] See Case (d).
\item[g)] See Case (c). 
\item[h)] The situation is the same as in Case (i) if $M_0$ does not coincide with a cusp. If $M_0$ lies in one of the two cusps, then $C_0$ touches $C_1$ in one of the two points with largest radius of curvature, and $C_0$ is the osculating circle in such a point (multiplicity four $\Rightarrow\#(C_0\cap C_1) = 4$).
\item[i)] Since the largest radius of curvature of $C_1$ is equal to $a^2/b$, it follows that $C_0$ and $C_1$ cannot intersect in four points. If $M_0$ lies in the open set bounded by $\Cm$, then $\#(C_0\cap C_1) = 0$. If $M_0\in\Cm$, then $C_0$ touches $C_1$ in one point with multiplicity two, hence $\#(C_0\cap C_1) = 2$.
\end{itemize}

\noindent
The orientations of $\Cm$ shown in Fig.\ \ref{Fig:Types} are the orientations resulting from \eqref{Eq:Param_C1k}. The starting points with value $\phi=0$ are marked with small line segments.

\section{Areas and measures for ellipses with fixed direction}
\label{Sec:Measures1}

We consider the moving ellipse $C_1^*$ (with fixed direction) and center point $M_1$, and derive expressions for the following areas of sets of positions of $M_1$ 
\beqn \label{Eq:A1}
  \Aoi
:= A(\{M_1\colon C_0\subset K_1^*\})\,,\quad
  \Aio
:= A(\{M_1\colon C_1^*\subset K_0\})
\eeqn
and
\beqn \label{Eq:A2}
  A_{2j}
:= A(\{M_1\colon \#(C_0\,\cap\,C_1^*) = 2j\})\,,\quad j\in\{1,2\}\,.
\eeqn
We denote by $m(\{C_1^*\colon X\})$ the measure of the set of positions of $C_1^*$ with property $X$. Since $C_1^*$ has fixed direction,
\begin{gather*}
  m(\{C_1^*\colon C_0\subset K_1^*\}) = \Aoi\,,\quad 
  m(\{C_1^*\colon C_1^*\subset K_0\}) = \Aio\,,\\[0.1cm]
  m(\{C_1^*\colon \#(C_0\,\cap\,C_1^*) = 2j\} = A_{2j}\,.
\end{gather*}
In the following, $E(\ph,\eps)$ denotes the incomplete elliptic integral of the second kind,
\beq
  E(\ph,\eps)
= \int_0^\ph\sqrt{1-\eps^2\sin^2\theta}\;\dd\theta\,,
\eeq
and $E(\eps) := E(\eps,\pi/2)$ the complete elliptic integral of the second kind.

\begin{lem} \label{Lem:Lemma}
Depending on the following cases, the areas \eqref{Eq:A1} and \eqref{Eq:A2} are given by

\begin{center}
\renewcommand{\arraystretch}{1.2}
\begin{tabular}{|c|c|c|c|c|c|} \hline
\rule{0pt}{13pt}
\hspace{-0.3cm} Case & Interval & $\Aoi$ & $\Aio$ & $A_2$ & $A_4$\\[2pt] \hline\hline
\rule{0pt}{20pt}
\hspace{-0.25cm} 1 &
$0 < r \le \dfrac{b^2}{a}$ & 
$A^*$ & 
$0$ & 
$8raE(\eps)$ & 
$0$\\[2pt]
\rule{0pt}{18pt}
\hspace{-0.25cm} 2 &
$\dfrac{b^2}{a} < r < b$ & 
$\Ft(\alpha)$ & 
$0$ & 
$2\pi r^2 + 2\pi ab - 2\Ft(\alpha)$ & 
$\Ft(\alpha) - A^*$\\[8pt]
\rule{0pt}{13pt}
\hspace{-0.25cm} 3 &
$b \le r \le a$ &
$0$ & 
$0$ & 
$2\pi r^2 + 2\pi ab$ & 
$-A^*$\\[1pt]
\rule{0pt}{20pt}
\hspace{-0.25cm} 4 &
$a < r < \dfrac{a^2}{b}$ & $0$ & $F(\beta)$ & $2\pi r^2 + 2\pi ab - 2F(\beta)$ &
$F(\beta) - A^*$\\[4pt]
\rule{0pt}{18pt}
\hspace{-0.25cm} 5 & 
$\dfrac{a^2}{b} \le r < \infty$ &
$0$ &
$A^*$ &
$8raE(\eps)$ & 
$0$ \\[6pt] \hline
\end{tabular}
\renewcommand{\arraystretch}{1}
\end{center}

\noindent where
\beq
  F(\ph)
= 2r^2\ph
+ 2ab\arctan\left(\frac{b}{a}\,\tan\ph\right)
- 4raE(\ph,\eps)
+ \frac{ra\eps^2\sin 2\ph}{\sqrt{1-\eps^2\sin^2\ph}}\,,
\eeq
\beq
  A^* 
= F(\pi/2) = \pi r^2 + \pi ab - 4raE(\eps)\,,\quad
  \Ft(\ph)
= A^* - F(\ph)\,,
\eeq
and
\beq
  \alpha
= \arctan\frac{\sqrt{r^2a^2-b^4}}{b\,\sqrt{b^2-r^2}}
  \,,\quad
  \beta
= \arctan\frac{a\,\sqrt{r^2-a^2}}{\sqrt{a^4-r^2b^2}}\,. 
\eeq
\end{lem}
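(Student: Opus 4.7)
The plan is to pass to the inverse motion of Section~\ref{Sec:Intersections}: fix $C_1^*$ and let the centre $M_0$ of $C_0$ sweep planar regions whose areas give the quantities in \eqref{Eq:A1} and \eqref{Eq:A2}. Because $K_0$ and $K_1^*$ are convex, $C_0 \subset K_1^*$ iff $M_0$ lies in the inner parallel set of $K_1^*$ at distance $r$, and $C_1^* \subset K_0$ iff every point of $C_1^*$ is within distance $r$ of $M_0$; the boundary of each of the four sets in \eqref{Eq:A1}--\eqref{Eq:A2} is therefore a union of arcs of $\Cp$ and $\Cm$. The five rows of the table correspond to the nine topological types of $\Cm$ in Fig.\ \ref{Fig:Types}, grouped as (a),(b) $\to$ Case~1; (c) $\to$ Case~2; (d),(e),(f) $\to$ Case~3; (g) $\to$ Case~4; (h),(i) $\to$ Case~5, and in each group the four configuration regions are identified from the multiplicity analysis given after Fig.\ \ref{Fig:Types}.

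I compute each area via the signed-area formula $A = \tfrac{1}{2}\oint(x\,\dd y - y\,\dd x)$ applied to the parametrization \eqref{Eq:Param_C1k}. Using $\int_0^{2\pi} p(\ph)\,\dd\ph = L = 4aE(\eps)$ and $\int_0^{2\pi} p''(\ph)\,\dd\ph = 0$, one full $\ph$-revolution with $k = \pm 1$ yields $\pi ab + 4kraE(\eps) + \pi r^2$; for $k = +1$ this is the Steiner area $A(K_1^* + rB)$, while for $k = -1$ it is the quantity $A^*$ appearing throughout the table. In Cases~1 and~5 the curve $\Cm$ is simple and bounds a region of (positive) area $A^*$, which is the inner parallel set in Case~1 (so $\Aoi = A^*$) and the admissible-centre region in Case~5 (so $\Aio = A^*$); the annular complement inside $\Cp$ has area $A(K_1^* + rB) - A^* = 8raE(\eps) = A_2$, and $A_4 = 0$ since $\Cm$ has no self-intersections.

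For Cases~2,~3,~4 the curve $\Cm$ has cusps at $\ph = \pm\lambda, \pi\pm\lambda$ and self-intersects, so its image splits into a middle loop and two outer loops traversed with opposite orientations. The self-intersection angles are obtained by solving $x(\ph_1) = x(\ph_2)$, $y(\ph_1) = y(\ph_2)$ on symmetric arcs of $\Cm$; after elimination this produces exactly the closed-form expressions for $\alpha$ and $\beta$ in the statement, each real precisely on the $r$-interval of its case. I then introduce the partial signed area
\[
  F(\ph)
:= \int_0^{\ph} \tfrac{1}{2}\bigl(x(\theta) y'(\theta) - y(\theta) x'(\theta)\bigr)\,\dd\theta \qquad (k = -1).
\]
Its integrand splits into three pieces: a polynomial part giving the $2r^2\ph$ and $E(\ph,\eps)$ terms; a rational part $a^2 b^2/p(\theta)^2$ giving (via $\tan\theta \mapsto (b/a)\tan\theta$) the $2ab\arctan((b/a)\tan\ph)$ term; and a boundary term integrating to $\frac{ra\eps^2 \sin 2\ph}{\sqrt{1 - \eps^2 \sin^2 \ph}}$. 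The identity $F(\pi/2) = A^*$ confirms the calculation. The individual loop areas in Cases~2--4 are then linear combinations of $F(\alpha)$, $F(\beta)$ and $A^*$: in Case~2 the middle loop has area $\Ft(\alpha) = A^* - F(\alpha) = \Aoi$, and the two outer loops give $A_4 = \Ft(\alpha) - A^*$; in Case~4 the central loop has area $F(\beta) = \Aio$ and the outer loops give $A_4 = F(\beta) - A^*$; in Case~3 there is no central loop for either of $\Aoi$, $\Aio$, and the outer loops sum to $A_4 = -A^*$ (here $A^*$ is itself negative in the relevant $r$-interval). Finally, $A_2 = A(K_1^* + rB) - \Aoi - \Aio - A_4$ collapses, after cancellation with $4raE(\eps)$, to the three expressions $2\pi r^2 + 2\pi ab - 2\Ft(\alpha)$, $2\pi r^2 + 2\pi ab$, and $2\pi r^2 + 2\pi ab - 2F(\beta)$ stated in the table.

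The principal obstacle is the explicit integration producing $F(\ph)$: recognizing the tangent substitution that linearises the rational piece into an $\arctan$, and isolating the algebraic summand $\frac{ra\eps^2\sin 2\ph}{\sqrt{1-\eps^2\sin^2\ph}}$ as an antiderivative of what remains, is the main computational burden. A secondary difficulty is orientation-tracking for the loops of $\Cm$ in Cases~2--4, since the middle and outer loops are traversed in opposite senses by \eqref{Eq:Param_C1k}, so that the unsigned loop areas emerge as the differences $A^* - F(\alpha)$, $F(\beta) - A^*$ or $-A^*$ rather than as direct integrals; this is responsible for the asymmetric appearance of the table entries.
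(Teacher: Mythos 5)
Your proposal is correct and follows essentially the same route as the paper: pass to the inverse motion, parametrize $\Cp$ and $\Cm$ by the support function (your signed-area integrand $\tfrac{1}{2}(xy'-yx')$ is exactly the paper's $\tfrac{1}{2}[p+kr][p+kr+p'']$), build the antiderivative $F$ with the same three-way split (elliptic, arctangent via the tangent substitution, algebraic boundary term), locate the self-intersections of $\Cm$ on the symmetry axes to get $\alpha$ and $\beta$, and assemble the table by orientation-tracked loop areas and the complement inside $\Cp$. No substantive differences or gaps.
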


\begin{proof}
We consider the inverse translation with fixed ellipse $C_1 = C_1^*$ and moving circle $C_0$ (of radius $r$). From Section \ref{Sec:Intersections} (see Figures \ref{Fig:Intersections1} and \ref{Fig:Intersections2}) it follows that the areas \eqref{Eq:A1} and \eqref{Eq:A2} are also given by
\begin{gather*}
  \Aoi
= A(\{M_0\colon C_0\subset K_1\})\,,\;\;
  \Aio
= A(\{M_0\colon C_1\subset K_0\})\,,\;\;
  A_{2j}
= A(\{M_0\colon \#(C_0\,\cap\,C_1) = 2j\})\,.
\end{gather*}
Intersection (touching) points with muliplicity $>1$ only occur if $M_0\in\Cp$ and $M_0\in\Cm$, so the sets of positions of $M_0$ with intersection multiplicity $>1$ always have area zero. Therefore, it suffices to consider only positions of $M_0$ with distinct intersection points.

The area $A^+$ of the set enclosed by the outer parallel curve $\Cp$ of $C_1$ in the distance $r$ is given by the integral
\beq
  A^+
= \frac{1}{2}\int_0^{2\pi}[p(\ph)+r]\,[p(\ph)+r+p''(\ph)]\,\dd\ph
\eeq
(see p.\ 3, Eq.\ (1.7) and p.\ 7 in \cite{Santalo}). The area and the perimeter of $K_1$ are given by $\pi ab$ and $4aE(\eps)$, respectively, where
\beqn \label{Eq:Eccentricity}
  \eps
= \frac{\sqrt{a^2-b^2}}{a}
\eeqn
is the eccentricity of $C_1$ (see e.\ g.\ \cite[pp.\ 230-232]{Bosch}). From Eq.\ (1.18) on  p.\ 8 in \cite{Santalo} it follows that
\beqn \label{Eq:A^+}
  A^+
= \pi r^2 + \pi ab + 4raE(\eps)\,.
\eeqn
(This result also follows from Eq.\ (14.5) or (14.6) on p.\ 600 in \cite{SchneiderWeil}.) The (signed) area of the set enclosed by the inner parallel curve $\Cm$ in the distance $r$ is given by the integral
\beqn \label{Eq:A^-}
  A^-
= \frac{1}{2}\int_0^{2\pi}[p(\ph)-r]\,[p(\ph)-r+p''(\ph)]\,\dd\ph
\eeqn
if $\Cm$ has no self intersections, even in the cases in which $\Cm$ is not convex (cp.\ \cite[p.\ 8]{Santalo}). The sign depends on the orientation of the curve. (Only if $r \le b^2/a$, $A_r$ is the area of the {\em interior parallel set}.) For a curve $\Cm$ with self intersections, \eqref{Eq:A^-} gives the sum of the signed areas of its loops depending on the orientation of each loop (see Figures \ref{Fig:Support_function} and \ref{Fig:Types}). Therefore, instead of \eqref{Eq:A^-}, in the following we use
\beq 
  \widetilde{A}^-
= 2\int_{\phi_1}^{\phi_2}[p(\ph)-r]\,[p(\ph)-r+p''(\ph)]\,\dd\ph\,,\quad
  0 \le \ph_1 < \ph_2 \le \frac{\pi}{2}\,,
\eeq
with suitable limits $\ph_1$, $\ph_2$ in order to derive areas $\widetilde{A}^-$ of sets enclosed by loops of $\Cm$, where the factor $2$ results from the symmetry of $\Cm$. 
    
So for the function
\beq
  f(\ph)
= 2\,[p(\ph)-r]\,[p(\ph)-r+p''(\ph)]
\eeq
we have to determine one of its antiderivatives $F(\ph)$,
\begin{align} \label{Eq:F-1}
\begin{split}
 F(\ph)
= {} & 2\int[p(\ph)-r]\,[p(\ph)-r+p''(\ph)]\,\dd\ph\\ 
= {} & 2\int[p(\ph)-r]^2\,\dd\ph
+ 2\int[p(\ph)-r]\,p''(\ph)\,\dd\ph\,.
\end{split}
\end{align}
Since we want to determine one antiverivative, we omit the constant of integation. Using integration by parts in the last integral with $u=p-r$, $u'=p'$, $v'=p''$, $v=p'$, one gets
\begin{align*}
  F(\ph)
= {} & 2\int[p(\ph)-r]^2\,\dd\ph + 2[p(\ph)-r]\,p'(\ph) 
- 2\int p'^2(\ph)\,\dd\ph\displaybreak[0]\\
= {} & 2r^2\ph + 2\int\left[p^2(\ph)-p'^2(\ph)\right]\dd\ph
- 4r\int p(\ph)\,\dd\ph + 2\,[p(\ph)-r]\,p'(\ph)\,.
\end{align*}
After the rearrangement
\begin{align*}
  p^2(\ph)-p'^2(\ph)
= {} & p^2(\ph) - \frac{(a^2-b^2)^2\cos^2\ph\sin^2\ph}{p^2(\ph)}
	\displaybreak[0]\\
= {} & p^2(\ph) - a^2\sin^2\ph - b^2\cos^2\ph 
- \frac{(a^2-b^2)^2\cos^2\ph\sin^2\ph}{p^2(\ph)}
+ a^2\sin^2\ph + b^2\cos^2\ph\displaybreak[0]\\
= {} & (a^2-b^2)(\cos^2\ph-\sin^2\ph) + \frac{a^2b^2}{p^2(\ph)}
= (a^2-b^2)\cos 2\ph + \frac{a^2b^2}{p^2(\ph)} 
\end{align*}
we find
\beqn \label{Eq:Integral1}
  \int\left[p^2(\ph)-p'^2(\ph)\right]\dd\ph
= \frac{(a^2-b^2)\sin 2\ph}{2} 
+ a^2b^2\int\frac{\dd\ph}{p^2(\ph)}\,.
\eeqn
Now we consider the last integral
\beq
  \int\frac{\dd\ph}{p^2(\ph)}
= \int\frac{\dd\ph}{a^2\cos^2\ph + b^2\sin^2\ph}
\eeq
which may be written as
\beq
\frac{1}{a^2}\int\frac{1}{1 + (b/a)^2\tan^2\ph}\,
  \frac{\dd\ph}{\cos^2\ph}
\eeq
(see \cite[p.\ 115, Eq.\ (235)]{Chemnitius}). The substitution
\beq
  z = \frac{b}{a}\,\tan\ph\,,\qquad
  \frac{\dd z}{\dd\ph} = \frac{b}{a\cos^2\ph}\,,\qquad
  \frac{\dd\ph}{\cos^2\ph} = \frac{a}{b}\,\dd z 
\eeq
gives
\begin{align} \label{Eq:Chemnitius235}
  \int\frac{\dd\ph}{p^2(\ph)}
= {} & \frac{1}{ab}\int\frac{\dd z}{1+z^2}
= \frac{1}{ab}\,\arctan z
= \frac{1}{ab}\,\arctan\left(\frac{b}{a}\,\tan\ph\right).
\end{align}
The support function \eqref{Eq:support1} may be written as
\beq 
  p(\ph)
= a\,\sqrt{1-\eps^2\sin^2\ph}
\eeq
with $\eps$ according to \eqref{Eq:Eccentricity}.
Now one gets
\beqn \label{Eq:Elliptic_integral}
  \int p(\ph)\,\dd\ph
= a\int\sqrt{1-\eps^2\sin^2\ph}\;\dd\ph 
= aE(\ph,\eps)\,,
\eeqn
and 
\begin{align} \label{Eq:Term}
  [p(\ph)-r]\,p'(\ph)
= \frac{ra\eps^2\sin 2\ph}{2\sqrt{1-\eps^2\sin^2\ph}}
- \frac{1}{2}\,a^2\eps^2\sin 2\ph\,.
\end{align}
Taking \eqref{Eq:Integral1}, \eqref{Eq:Chemnitius235}, \eqref{Eq:Elliptic_integral} and \eqref{Eq:Term} into account, an antiderivative $F(\ph)$ (see \eqref{Eq:F-1}) is given by
\begin{align*}
  F(\ph)
= {} & 2r^2\ph + (a^2-b^2)\sin 2\ph
+ 2ab\arctan\left(\frac{b}{a}\,\tan\ph\right)
- 4raE(\alpha,\eps)\\ 
& + \frac{ra\eps^2\sin 2\ph}{\sqrt{1-\eps^2\sin^2\ph}}
 - a^2\eps^2\sin 2\ph\,,
\end{align*}
hence
\beq 
  F(\ph)
= 2r^2\ph 
+ 2ab\arctan\left(\frac{b}{a}\,\tan\ph\right)
- 4raE(\alpha,\eps)  
+ \frac{ra\eps^2\sin 2\ph}{\sqrt{1-\eps^2\sin^2\ph}}\,.
\eeq

\newpage\noindent 
Now we are able to investigate the five cases according to Lemma \ref{Lem:Lemma}. For this purpose we use Fig.~\ref{Fig:Types}.
\begin{itemize}[leftmargin=0.6cm]
\item[1)] (Figures \ref{Fig:Types} a and b) We have $A_4 = 0$. $\Cm$ is positively oriented, hence
\beq
  \Aoi
= F(\pi/2) - F(0)
= F(\pi/2)
= \pi r^2 + \pi ab - 4raE(\eps)
=: A^*\,.
\eeq
Since $b > r$, $C_1$ cannot be contained in $K_0$, hence $\Aio = 0$. With $A^+$ according to \eqref{Eq:A^+}, we get
\beq
  A_2
= A^+ - \Aoi
= \pi r^2 + \pi ab + 4raE(\eps) 
- \left(\pi r^2 + \pi ab - 4raE(\eps)\right)
= 8raE(\eps)\,.
\eeq
\item[2)] (Fig.\ \ref{Fig:Types} c) Here, as in Case 1, $\Aio = 0$. We denote by $\alpha$ the first value of $\ph$ belonging to a self intersection point. Here the $y$-coordinate of $\Cm$ is equal to zero. From \eqref{Eq:Param_C1k} we get
\beq
  \frac{b^2}{p(\alpha)} - r = 0
\eeq
which yields
\beq
  \alpha
= \arctan\frac{\sqrt{r^2a^2-b^4}}{b\,\sqrt{b^2-r^2}}\,.
\eeq
The middle loop of $\Cm$ is positively oriented, hence
\beq
  \Aoi
= F(\pi/2) - F(\alpha)
= A^* - F(\alpha)
= \Ft(\alpha)\,.
\eeq
The outer loops of $\Cm$ are negatively oriented, hence
\beq
  A_4
= -(F(\alpha) - F(0))
= -F(\alpha)
= -\left(A^*-\Ft(\alpha)\right)
= \Ft(\alpha) - A^*\,. 
\eeq
It follows that
\begin{align*}
 A_2
= {} & A^+ - \Aoi - A_4\\
= {} & \pi r^2 + \pi ab + 4raE(\eps) - \Ft(\alpha)
- \left[\Ft(\alpha) - (\pi r^2 + \pi ab - 4raE(\eps))\right]\\
= {} & 2\pi r^2 + 2\pi ab - 2\Ft(\alpha)\,.
\end{align*}
\item[3)] (Fig.\ \ref{Fig:Types} d, e, f) Here one easily sees that $\Aoi = 0 = \Aio$. $\Cm$ is negatively oriented, hence
\beq
  A_4
= -(F(\pi/2) - F(0))
= -F(\pi/2)
= -A^*. 
\eeq
It follows that
\begin{align*}
  A_2
= {} & A^+ - A_4
= \pi r^2 + \pi ab + 4raE(\eps)
- \left(4raE(\eps) - \pi r^2 - \pi ab\right)
= 2\pi r^2 + 2\pi ab\,.
\end{align*}
\item[4)] (Fig.\ \ref{Fig:Types} g) Since $r > a$, $C_0$ cannot be contained in $K_1$, hence $\Aoi = 0$. We denote by $\beta$ the first value of $\ph$ belonging to a self intersection point. Here the $x$-coordinate of $\Cm$ is equal to zero. From \eqref{Eq:Param_C1k} we get 
\beq
  \frac{a^2}{p(\beta)} - r = 0
\eeq
which yields
\beq
  \beta
= \arctan\frac{a\,\sqrt{r^2-a^2}}{\sqrt{a^4-r^2b^2}}\,. 
\eeq
The middle loop of $\Cm$ is positively oriented, hence
\beq
  \Aio
= F(\beta) - F(0)
= F(\beta)
\eeq
The outer loops of $\Cm$ are negatively oriented, hence
\beq
  A_4
= -\left(F(\pi/2) - F(\beta)\right)
= F(\beta) - A^*, 
\eeq
and
\begin{align*}
  A_2
= {} & A^+ - \Aio - A_4\\
= {} & \pi r^2 + \pi ab + 4raE(\eps) - F(\beta) 
- \left[F(\beta) - (\pi r^2 + \pi ab - 4raE(\eps))\right]\\
= {} & 2\pi r^2 + 2\pi ab - 2F(\beta)\,.  
\end{align*}
\item[5)] (Fig.\ \ref{Fig:Types} i) We have $A_4 = 0$. $\Cm$ is positively oriented, hence
\beq
  \Aio
= F(\pi/2) - F(0)
= A^*.
\eeq
Finally, we get
\beq
  A_2
= A^+ - \Aio
= \pi r^2 + \pi ab + 4raE(\eps) 
- \left(\pi r^2 + \pi ab - 4raE(\eps)\right)
= 8raE(\eps)\,. \qedhere
\eeq
\end{itemize}
\end{proof}

\section{Measures for oriented ellipses}
\label{Sec:Measures2}

Now let us go back to the original motion of the ellipse $C_1$ with respect to the fixed circle $C_0$. We give up the assumption that the direction of $C_1$ is fixed. In the following we consider $C_1$ as oriented. For this orientation we attach a frame $\xi,\eta$ to $C_1$ with its origin in the center point $M_1$ of $C_1$ (see Fig.\ \ref{Fig:Motion01}). Furthermore, we define this measures
\beq
  \mio := m(\{C_1\colon C_1\subset K_0\})\,,\quad
  \moi := m(\{C_1\colon C_0\subset K_1\})\,, 
\eeq
\beq
  m_2 := m(\{C_1\colon \#(C_0 \cap C_1) = 2\})\,,\quad
  m_4 := m(\{C_1\colon \#(C_0 \cap C_1) = 4\})\,. 
\eeq
for $C_1$.

\begin{thm} \label{Thm:Theorem}
The measure for all oriented ellipses with semi-major axis of length $a$ and semi-minor axis of length $b$ which intersect a fixed circle of radius $r$ in two and four points are given by
\begin{align} \label{Eq:m2-m4}
  m_2
= {} & 4\pi^2r^2 + 4\pi^2ab - 2\mI
  \quad\mbox{and}\quad
  m_4
= 8\pi raE(\eps) - 2\pi^2r^2 - 2\pi^2ab + \mI\,, 
\end{align}
respectively, where, with the cases of Lemma \ref{Lem:Lemma},
\beqn \label{Eq:mi}
  \mI = \left\{
  \begin{array}{l@{\quad\mbox{if}\quad}r@{\:\,}c@{\:\,}ll}
	\mio = 2\pi\Aio & a & < & r & \mbox{(Cases 4 and 5)}\,,\\
	0    & (a & \ge & r) \land (b \le r) & \mbox{(Case 3)}\,,\\
	\moi = 2\pi\Aoi & b & > & r & \mbox{(Cases 1 and 2)}\,,
  \end{array}
  \right\}
\eeqn
and
\beq
  \Aio = \left\{
  \begin{array}{l@{\quad\mbox{if}\quad}ll}
	\pi r^2+\pi ab-4raE(\eps) & a^2/b\le r & \mbox{(Case 5)}\,,\\[0.1cm]
	F(\beta) & a^2/b>r & \mbox{(Case 4)}\,,
  \end{array}
  \right.
\eeq
\beq
  \Aoi = \left\{
  \begin{array}{l@{\quad\mbox{if}\quad}ll}
	\Ft(\alpha) & b^2/a<r	& \mbox{(Case 2)}\,,\\[0.1cm]
	\pi r^2+\pi ab-4raE(\eps) & b^2/a\ge r & \mbox{(Case 1)}\,.  
  \end{array}
  \right.
\eeq
\end{thm}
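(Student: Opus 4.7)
The plan is to reduce the claim directly to Lemma \ref{Lem:Lemma} by integrating out the orientation angle. An oriented ellipse $C_1$ is parameterized by its center $M_1=(x,y)$ together with the angle $\theta\in[0,2\pi)$ of the attached frame $\xi,\eta$, and the kinematic measure on the group of rigid motions in the plane is $dC_1=dx\,dy\,d\theta$. Because the fixed set $C_0$ is a disk, every geometric condition defining $\mio,\moi,m_2,m_4$ is invariant under rotation of the whole configuration about the center $M_0$ of $C_0$: such a rotation fixes $C_0$ setwise and carries an ellipse of orientation $\theta$ to one of orientation $0$. Hence for each $\theta$ the planar area of the set of admissible centers $M_1$ is independent of $\theta$ and coincides with the corresponding quantity $\Aio,\Aoi,A_2,A_4$ computed in Lemma \ref{Lem:Lemma}.

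Integrating $d\theta$ out first therefore gives
\[
\mio=2\pi\Aio,\qquad \moi=2\pi\Aoi,\qquad m_2=2\pi A_2,\qquad m_4=2\pi A_4.
\]
Next I would inspect the five rows of Lemma \ref{Lem:Lemma}: in Cases 1 and 2 the inequality $b>r$ makes $C_1\subset K_0$ impossible, so $\Aio=0$ and only $\Aoi$ contributes; in Cases 4 and 5 the inequality $a<r$ makes $C_0\subset K_1$ impossible, so $\Aoi=0$ and only $\Aio$ contributes; in Case 3, $b\le r\le a$, both quantities vanish. This is exactly the three-branch definition of $\mI$ in \eqref{Eq:mi}, and the explicit formulas for $\Aio$ (Cases 4,5) and $\Aoi$ (Cases 1,2) can be read off directly from the table.

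Finally, I would verify \eqref{Eq:m2-m4} by substitution. Recalling $A^*=\pi r^2+\pi ab-4raE(\eps)$ and $\Ft(\alpha)=A^*-F(\alpha)$, each of the five rows of Lemma \ref{Lem:Lemma} satisfies the uniform identities
\[
A_2=2\pi r^2+2\pi ab-2(\Aoi+\Aio),\qquad A_4=4raE(\eps)-\pi r^2-\pi ab+(\Aoi+\Aio).
\]
Multiplying by $2\pi$ and using $\mI=2\pi(\Aoi+\Aio)$ (legitimate since at most one of the two summands is nonzero in any given case) turns these into the asserted formulas $m_2=4\pi^2r^2+4\pi^2ab-2\mI$ and $m_4=8\pi raE(\eps)-2\pi^2r^2-2\pi^2ab+\mI$. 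The main obstacle is purely bookkeeping: keeping track of the signs inherited from the orientations of the loops of $\Cm$ in Cases 2 and 4 so that the two uniform identities above really do hold row by row. No new analytic computation beyond Lemma \ref{Lem:Lemma} is needed.
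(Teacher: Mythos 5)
Your proposal is correct and follows essentially the same route as the paper: factor the kinematic density as $\dd x_1\wedge\dd y_1\wedge\dd\psi$, use the rotational symmetry of the disk $K_0$ to see that the area of admissible centers is independent of the orientation angle, integrate out that angle to obtain $m=2\pi A$ for each of the four quantities, and then read the case distinctions and the formulas \eqref{Eq:m2-m4} off the table of Lemma \ref{Lem:Lemma}. Your explicit row-by-row check of the uniform identities $A_2=2\pi r^2+2\pi ab-2(\Aoi+\Aio)$ and $A_4=4raE(\eps)-\pi r^2-\pi ab+(\Aoi+\Aio)$ is a slightly more detailed verification than the paper bothers to write out, but it is the same argument.
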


\begin{figure}[h]
  \begin{center}
	\includegraphics[scale=1]{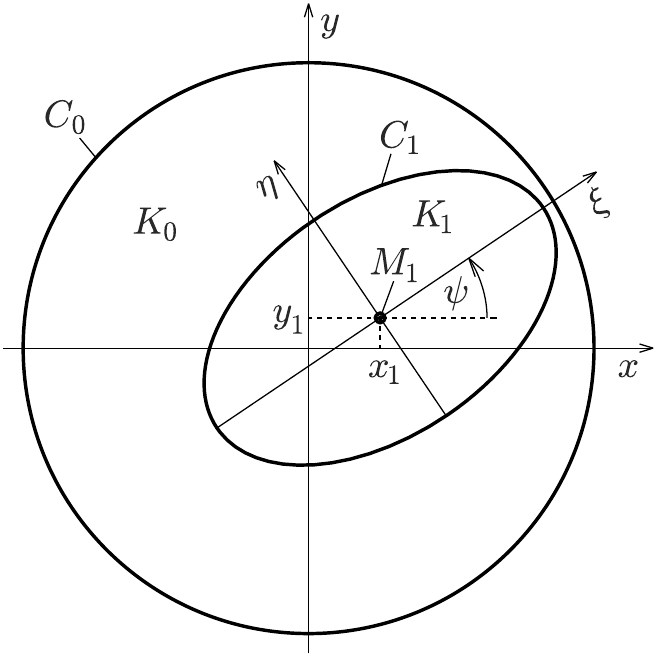}
  \end{center}
  \vspace{-0.6cm}
  \caption{\label{Fig:Motion01} For the proof of Theorem \ref{Thm:Theorem}}
\end{figure}

\begin{proof}
We have
\beq
  \mio
= m(\{C_1\colon C_1\subset K_0\})
= m(\{K_1\colon K_1\subset K_0\})
= \int_{\{K_1\,:\,K_1\subset K_0\}}\dd K_1\,,
\eeq
where $\dd K_1$ denotes the kinematic density of $K_1$ (see \cite[pp.\ 85-89]{Santalo}). We can write it as $\dd K_1 = \dd x_1\wedge\dd y_1\wedge\dd\psi$, where $x_1$, $y_1$ are the coordinates of the center point $M_1$ of $K_1$ with respect to the fixed $x,y$-frame and $\psi$ is the angle between the $x$-axis of the fixed frame and the $\xi$-axis of the moving frame (see Fig.\ \ref{Fig:Motion01}). We get
\beq
  \mio
= 2\pi\int\dd x_1\wedge\dd y_1\,,  
\eeq
where the integral has to be taken over the points $M_1$ such that $K_1\subset K_0$ for fixed angle $\psi$, hence
\begin{align*}
  \mio
= {} & 2\pi A(\{M_1\colon K_1^*\subset K_0\})
= 2\pi A(\{M_1\colon C_1^*\subset K_0\})\\[0.1cm]
= {} & 2\pi A(\{M_0\colon C_1^*\subset K_0\})
= 2\pi\Aio\,,
\end{align*}
where
\beq
  \Aio = \left\{
  \begin{array}{ll}
	\pi r^2+\pi ab-4raE(\eps) & \mbox{if $a^2/b\le r$}\quad 
		\mbox{(Case 5 in Lemma \ref{Lem:Lemma})}\,,\\[0.1cm]
	F(\beta) & \mbox{if $a^2/b>r$}\quad 
		\mbox{(Case 4)}\,,\\[0.1cm]
	0 & \mbox{in Cases 1-3}\,.
  \end{array}
  \right.
\eeq
Analogously, one finds $\moi = 2\pi\Aoi$, $m_2 = 2\pi A_2$ and $m_4 = 2\pi A_4$, with $\Aoi$, $A_2$, $A_4$ from Lemma \ref{Lem:Lemma}. 	
\end{proof}

\section{Hitting probabilities}
\label{Sec:Probabilities}

Now we consider the random throw of an ellipse $C_1$ onto an unbounded lattice of circles $C_0$ of radius $r$ as it is shown in Fig.\ \ref{Fig:Reseau}. The center points $M_0$ of the circles $C_0$ lie on the vertices of parallelograms with sides of length $s$ and $t$, and angle $\sigma$, $0<\sigma\le\pi/2$. We assume that $C_1$ can hit at most one $C_0$ at the same time. Due to the periodicity of the lattice it suffices to consider only one parallelogram $\mathcal{P}$ for the calculation of the hitting probabilities, for which we choose
\beq
  \mathcal{P}
= \{(x,y)\in\mathbb{R}^2\colon 0\le y\le t\sin\sigma\,,\;
	y\cot\sigma\le x\le s+y\cot\sigma\}\,.
\eeq
Now, we define the {\em random throw} as follows: The coordinates $x_1$, $y_1$ of $M_1$ are random variables uniformly distributed in $[y\cot\sigma, s+y\cot\sigma]$ and $[0, t\sin\sigma]$, respectively; the angle $\psi$ between the $x$-axis of the fixed frame and the $\xi$-axis of the frame attached to $C_1$ is uniformly distributed in $[0,2\pi]$. All three random variables are stochastically independent.

The total measure for all positions of the oriented ellipse $C_1$ with center point $M_1$ in $\mathcal{P}$ is given by
\beq
  m_\tn{t}
= \int_{\{C_1\,:\,M_1\in\,\mathcal{P}\}}\dd x_1\wedge\dd y_1\wedge\dd\psi
= 2\pi\int_{\{M_1\,:\,M_1\in\,\mathcal{P}\}}\dd x_1\wedge\dd y_1
= 2\pi st\sin\sigma\,.
\eeq

The events $C_1\subset K_0$ and $C_0\subset K_1$ are mutually exclusive,
\beq
  (C_1\subset K_0) \land (C_0\subset K_1) = \emptyset\,.
\eeq
So we may write the measure $m_\tn{i}$ (see \eqref{Eq:mi}) as
\beq
  \mI
= m(\{C_1 \colon (C_1\subset K_0) \lor (C_0\subset K_1) 
	\ne \emptyset\})\,.
\eeq
We define the measure
\beq
  m_\tn{e}
= m(\{C_1\colon K_0\cap K_1 = \emptyset\})
= m(\{K_1\colon K_0\cap K_1 = \emptyset\})
\eeq
and find
\begin{align*}
  m_\tn{e}
= {} & m_\tn{t} - (m_2 + m_4 + \mI)
= m_\tn{t} - m(\{K_1\colon K_0\cap K_1 \ne \emptyset \})\\
= {} & 2\pi st\sin\sigma
- \left[2\pi^2r^2 + 2\pi^2 ab + 8\pi raE(\eps)\right].
\end{align*}
 
Now we define the hitting probabilities
\beq
  p_\tn{i}
= P((C_1\subset K_0) \lor (C_0\subset K_1) \ne \emptyset)\,,\qquad
  p_\tn{e}
= P(K_0 \cap K_1 = \emptyset)
= 1 - P(K_0 \cap K_1 \ne \emptyset) 
\eeq
and, for the number of intersection points,
\beq
  p_{2j}
= P(\#(C_0\cap C_1) = 2j)\,,\quad j\in\{0,1,2\}\,.
\eeq
So with
\beq
  p_2 = \frac{m_2}{m_\tn{t}}\,,\quad
  p_4 = \frac{m_4}{m_\tn{t}}\,,\quad
  p_\tn{i} = \frac{\mI}{m_\tn{t}}\,,\quad
  p_\tn{e} = \frac{m_\tn{e}}{m_\tn{t}}
\eeq
and
\beq
  p_0 = 1 - p_2 - p_4 = p_\tn{i} + p_\tn{e}
\eeq
we get the following corollary.  	  

\begin{cor} \label{Cor:Corollary}
Under the assumption $2(a+r) \le \min(s,t)$,
\begin{gather*}
  p_0
= 1 - \frac{2\pi^2r^2 + 2\pi^2ab + 8\pi raE(\eps) - \mI}
	{2\pi st\sin\sigma}\,,\quad
  p_2
= \frac{2\pi^2r^2 + 2\pi^2ab - \mI}{\pi st\sin\sigma}\,,\\[0.2cm]
  p_4 
= \frac{8\pi raE(\eps) - 2\pi^2r^2 - 2\pi^2 ab + \mI}
	{2\pi st\sin\sigma}\,,\;\;\,
 \pI
= \frac{\mI}{2\pi st\sin\sigma}\,,\;\;\,
 \pe
= 1 - \frac{2\pi^2r^2 + 2\pi^2ab + 8\pi raE(\eps)}{2\pi st\sin\sigma}
\end{gather*}
with $\mI$ according to Theorem \ref{Thm:Theorem}.
\end{cor}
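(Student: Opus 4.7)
The plan is to assemble the corollary from three ingredients already in place: the total measure $m_\tn{t}=2\pi st\sin\sigma$ derived just above the statement, the formulas for $m_2$, $m_4$, $\mI$ supplied by Theorem \ref{Thm:Theorem}, and the measure $m_\tn{e}$ computed in the text preceding the corollary. Each hitting probability is then simply the corresponding measure divided by $m_\tn{t}$.

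First I would record why this quotient really is a probability. By the definition of the random throw, $(x_1,y_1,\psi)$ is uniform on $\mathcal{P}\times[0,2\pi]$, whose total volume in the kinematic density is exactly $m_\tn{t}=2\pi st\sin\sigma$. The hypothesis $2(a+r)\le\min(s,t)$ ensures that $C_1$ can meet at most one lattice circle in any given position, so by periodicity of the lattice one may restrict attention to a single reference circle $C_0$, and $P(A)=m(A)/m_\tn{t}$ for each of the events $\{\#(C_0\cap C_1)=2j\}$, $\{C_1\subset K_0\}$, $\{C_0\subset K_1\}$ and $\{K_0\cap K_1=\emptyset\}$.

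Next I would substitute. Plugging $m_2 = 4\pi^2r^2+4\pi^2 ab-2\mI$ and $m_4 = 8\pi raE(\eps)-2\pi^2r^2-2\pi^2 ab+\mI$ from Theorem \ref{Thm:Theorem} into $p_{2j}=m_{2j}/m_\tn{t}$ yields the claimed formulas for $p_2$ and $p_4$; the expression $\pI=\mI/m_\tn{t}$ is immediate. For $\pe$ I would use $m_\tn{e}=m_\tn{t}-m_2-m_4-\mI$, which has already been simplified to $2\pi st\sin\sigma-[2\pi^2r^2+2\pi^2 ab+8\pi raE(\eps)]$ in the text, and divide. The formula for $p_0$ then follows either as $1-p_2-p_4$ or equivalently as $\pI+\pe$, using the mutual exclusivity $(C_1\subset K_0)\land(C_0\subset K_1)=\emptyset$ observed just above the statement.

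The only point requiring genuine care is the reduction to a single circle underlying the identification of measures with probabilities. Since $K_1$ lies in a disk of radius $a$ about $M_1$, the relation $K_0\cap K_1\neq\emptyset$ forces $\|M_0-M_1\|\le a+r$; the hypothesis $2(a+r)\le\min(s,t)$ then rules out two distinct lattice vertices both lying within $a+r$ of $M_1$, so at most one circle of the lattice is involved for every realisation of the throw. With this in hand, all remaining steps are direct substitution of the measure formulas from Theorem \ref{Thm:Theorem}, and no further non-routine calculation is needed.
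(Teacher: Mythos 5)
Your proposal is correct and follows essentially the same route as the paper: the corollary is obtained by dividing the measures $m_2$, $m_4$, $\mI$, $m_\tn{e}$ by the total measure $m_\tn{t}=2\pi st\sin\sigma$ computed just before the statement, with $p_0=\pI+\pe=1-p_2-p_4$ following from the mutual exclusivity noted in the text. Your added justification that $K_0\cap K_1\neq\emptyset$ forces $\|M_0-M_1\|\le a+r$, so that the hypothesis confines the throw to a single reference circle, is exactly the (implicit) role the assumption $2(a+r)\le\min(s,t)$ plays in the paper.
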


\section{Special case: Line segment} \label{Sec:Special_case}

Now we consider for the ellipse $C_1$ the special case $a \ne 0$, $b = 0$ so that it degenerates to a pair of coinciding line segments of length $\ell:=2a$. Since $b = 0 < r$, the Cases 1 and 2 (see Thm \ref{Thm:Theorem}) cannot occur. Since $a^2/b = a^2/0 = \infty > r$, the Case 5 cannot occur, so Case 3 and Case 4 are the remaining cases. We have
\beq
  \eps = 1 \quad\mbox{and}\quad E(1) = 1\,.
\eeq
Therefore, considering the two coinciding line segments as one line segment, from \eqref{Eq:m2-m4} follows
\beqn \label{Eq:m2-m4-Santalo}
  m_1
= 4\pi^2r^2 - 2\mI\,,\quad
  m_2
= 4\pi r\ell - 2\pi^2r^2 + \mI\,. 
\eeqn
In Case 3 we have $\ell \ge 2r$ and $\mI = 0$. Therefore, $m_1 = \mathfrak{M}_1$ and $m_2 = \mathfrak{M}_2$, where $\mathfrak{M}_1$ and $\mathfrak{M}_2$ are Santal\'o's terms for the measures in \eqref{Eq:M1_M2} with $\mathfrak{M}_\tn{i} = 0$. In Case 4 it remains to show that
\beq
  \mI
= 2\pi\AI = 2\pi F(\beta)
= 2\pi\left[2r^2\beta
+ 2ab\arctan\left(\frac{b}{a}\,\tan\beta\right)
- 4raE(\beta,\eps)
+ \frac{ra\eps^2\sin 2\beta}{\sqrt{1-\eps^2\sin^2\beta}}\right]
\eeq
with
\beq
  \beta 
= \arctan\frac{\sqrt{r^2-a^2}}{a}
= \arctan\frac{\sqrt{1-(a/r)^2}}{a/r}
\eeq
is equal to $\mathfrak{M}_\tn{i}$ according to \eqref{Eq:Mi}. The expression for the angle $\beta$ may be written as
\beq
  \beta
= \frac{\pi}{2} - \arcsin\frac{a}{r}\
= \frac{\pi}{2} - \arcsin\frac{\ell}{2r}\,. 
\eeq
(In \cite[Vol.\ 1, p.\ 76]{Gradstein_Ryshik}, Eq. (1.624.3) states incorrectly that
\beq
  \arctan\frac{\sqrt{1-x^2}}{x}
= \arcsin x\,,\quad 0< x \le 1\,.
\eeq
The correct formula, which we use, is
\beq
  \arctan\frac{\sqrt{1-x^2}}{x}
= \frac{\pi}{2} - \arcsin x\,.)
\eeq
Next we have
\beq
  \arctan\left(\frac{b}{a}\,\tan\beta\right)
= \arctan\left(0\,\tan\beta\right)
= 0\,.
\eeq
With
\begin{align*}
  \sin\beta
= {} & \sin\left(\arctan\frac{\sqrt{r^2-a^2}}{a}\right)
= \sqrt{1-\frac{a^2}{r^2}}
\end{align*}
one finds that
\beq
  E(\beta,\eps)
= E(\beta,1)
= E\left(\arctan\frac{\sqrt{r^2-a^2}}{a}\,,\,1\right)
= \sqrt{1-\frac{a^2}{r^2}}
= \frac{1}{r}\,\sqrt{r^2-\frac{\ell^2}{4}}
\eeq
as well as
\beq
  \frac{\eps^2\sin 2\beta}{\sqrt{1-\eps^2\sin^2\beta}}
= \frac{2\eps^2\sin\beta\cos\beta}{\sqrt{1-\eps^2\sin^2\beta}}
= \frac{2\sin\beta\cos\beta}{\sqrt{1-\sin^2\beta}}
= 2\sin\beta
= 2\,\sqrt{1-\frac{a^2}{r^2}}
= \frac{2}{r}\,\sqrt{r^2-\frac{\ell^2}{4}}\,.
\eeq
It follows that
\begin{align*}
  F(\beta)
= {} & 2r^2\left(\frac{\pi}{2} - \arcsin\frac{\ell}{2r}\right) + 0
- 4r\frac{\ell}{2}\frac{1}{r}\,\sqrt{r^2-\frac{\ell^2}{4}}
+ r\frac{\ell}{2}\frac{2}{r}\,\sqrt{r^2-\frac{\ell^2}{4}}\\
= {} & \pi r^2 - 2r^2\arcsin\frac{\ell}{2r}
- \ell\,\sqrt{r^2-\frac{\ell^2}{4}}\,, 
\end{align*}
so $\mI = 2\pi F(\beta)$ is indeed Santal\'o's formula for $\mathfrak{M}_\tn{i}$ (see \eqref{Eq:Mi}, or (32) in \cite[p.\ 165]{Santalo40}) for our Case~4 in which $\ell < 2r$ holds.

The hitting probabilities in Corollary \ref{Cor:Corollary} turn into 
\beqn \label{Eq:Measure-prob-needle}
\left.
\begin{array}{c}
\ds{p_0
= 1 - \frac{2\pi^2 r^2 + 4\pi r\ell - \mI}
	{2\pi st\sin\sigma}\,,\quad
  p_1
= \frac{2\pi^2 r^2 - \mI}{\pi st\sin\sigma}\,,\quad
  p_2 
= \frac{4\pi r\ell - 2\pi^2 r^2 + \mI}{2\pi st\sin\sigma}\,,}\\[0.4cm]
\ds{\pI
= \frac{\mI}{2\pi st\sin\sigma}\,,\quad
  \pe	
= 1 - \frac{2\pi^2 r^2 + 4\pi r\ell}
	{2\pi st\sin\sigma}\,.}
\end{array}
\right\}
\eeqn
These are Santal\'o's probabilities for the line segment under the assumption that the line segment can hit only one circle \cite[p.\ 166, Eq.\ (36)]{Santalo40}.

\section{Some comments}

In all five cases of Theorem \ref{Thm:Theorem} one finds that $2A_2 + 4A_4
= 16raE(\eps)$, hence
\begin{align*}
  2m_2 + 4m_4
= {} & 2\pi(2A_2 + 4A_4)
= 32\pi raE(\eps)
= 4\times 2\pi r\times 4aE(\eps)\\
= {} & 4 \times (\tn{length of $C_0$}) \times (\tn{length of $C_1$})\,.
\end{align*}
This generally follows from Poincar\'e's formula (see Eq. (7.10) in \cite[p.\ 111]{Santalo}). Analogously, from Corollary \ref{Cor:Corollary} it follows that the expected value for the random number $Z := \#(C_0\cap C_1)$ of intersection points is given by
\begin{align*}
  \mathbb{E}(Z) 
= {} & \sum_{j=1}^4 jp_j
= 2p_2 + 4p_4
= \frac{16raE(\eps)}{st\sin\sigma}
= \frac{2\cdot 2\pi r\cdot 4aE(\eps)}{\pi st\sin\sigma}\\
= {} & \frac{2\times(\tn{length of $C_0$}) \times (\tn{length of $C_1$})}
		{\pi st\sin\sigma}\,.
\end{align*}
One also gets $\mathbb{E}(Z)$ from Eq. (8.11) in \cite[p.\ 134]{Santalo}.  

The measure $m(\{K_1\colon K_0\cap K_1\ne\emptyset\})$ also follows from the {\em principal kinematic formula} (see Theorem 5.1.3 in \cite[p.\ 175]{SchneiderWeil}).

Santal\'o only calculated the measure $\mathfrak{M}_\tn{i}$ and derived the measures $\mathfrak{M}_1$ and $\mathfrak{M}_2$ using Poincar\'e's formula in the form
\beq
  \mathfrak{M}_1 + 2\mathfrak{M}_2
= 4\times(\tn{length of $C_0$}) \times (\tn{length of line segment})
= 8\pi r\ell 
\eeq
(see \cite[pp.\ 164-165]{Santalo40}, Equations (27) and (30)). 

Our approach was different. We first calculated all areas from geometrical considerations. It was easy to find the area of the set enclosed by the outer parallel curve $\Cp$, but it required some effort to calculate the areas of the sets enclosed by the loops of the inner parallel curve $\Cm$. Then, we derived the respective measures.

Clearly, the hitting probabilities in Corollary \ref{Cor:Corollary} remain unchanged if we throw a circle $C_0$ onto a lattice as in Fig.\ \ref{Fig:Reseau} where each circle is replaced by a congruent copy of an ellipse $C_1$.

\newpage

\bigskip\noindent
Uwe B\"asel, HTWK Leipzig, Fakult\"at Maschinenbau und Energietechnik, PF 30 11 66, 04251 Leipzig, Germany, \texttt{uwe.baesel@htwk-leipzig.de}
\end{document}